\newtheorem{thm}{Theorem}[section]
\newtheorem{lem}{Lemma}[section]
\newtheorem{cor}{Corollary}
\theoremstyle{definition}
\newtheorem{definition}[thm]{Definition}
\newtheorem{prop}{Proposition}[section]
\newtheorem{remark}[thm]{Remark}
\newtheorem{problem}{Problem}
\newtheorem{theorem}{Theorem}
\newtheorem{lemma}{Lemma}
\newtheorem{exe}{Exercise}
\newtheorem{exa}{Example}
\newtheorem{question}{Question}
\newtheorem{conjecture}{Conjecture}
\newcommand{\blem}{\begin{lemma}}
	\newcommand{\elem}{\end{lemma}}
\newcommand{\bexer}{\begin{exe}}
	\newcommand{\eexer}{\end{exe}}
\newcommand{\beq}{\begin{eqnarray}}
	\newcommand{\eeq}{\end{eqnarray}}
\newcommand{\bthm}{\begin{theorem}}
	\newcommand{\ethm}{\end{theorem}}
\newcommand{\beg}{\begin{exa}}
	\newcommand{\eeg}{\end{exa}}
\newcommand{\bdefe}{\begin{definition}}
	\newcommand{\edefe}{\end{definition}}
\newcommand{\bprop}{\begin{prop}}
	\newcommand{\eprop}{\end{prop}}
\newcommand{\bpf}{\begin{proof}}
	\newcommand{\epf}{\end{proof}}
\def\be{\begin{equation}}
	\def\ee{\end{equation}}
\newtheorem{cl}{Claim}
\newcommand{\bcl}{\begin{cl}}
	\newcommand{\ecl}{\end{cl}}
\newenvironment{pf}[1][]{%
	\vskip 3mm
	\noindent
	\ifthenelse{\equal{#1}{}}%
	{{\slshape Proof. }}%
	{{\slshape #1.} }%
}%
{\qed\bigskip}
	\newcommand{\zb}{{\overline{z}}}
		\newcommand{\T}{\mathbb{U}}
	\newcommand{\bquess}{\begin{questions}}
		\newcommand{\equess}{\end{questions}}
	\newcommand{\br}{\begin{remark}}
		\newcommand{\er}{\end{remark}}
	\newcommand{\brs}{\begin{remarks}}
		\newcommand{\ers}{\end{remarks}}
	\newcommand{\bn}{\begin{nonsec}}
		\newcommand{\en}{\end{nonsec}}
	\newcommand{\ds}{\displaystyle}
	\newcommand{\bcor}{\begin{cor}}
		\newcommand{\ecor}{\end{cor}}
	\newcommand{\bprob}{\begin{problem}}
		\newcommand{\eprob}{\end{problem}}
	\newcommand{\bcon}{\begin{conjecture}}
		\newcommand{\econ}{\end{conjecture}}
	\newcommand{\bques}{\begin{question}}
		\newcommand{\eques}{\end{question}}
	\newcommand{\begs}{\begin{examples}}
		\newcommand{\eegs}{\end{examples}}
	\newcommand{\bdefes}{\begin{definitions}}
		\newcommand{\edefes}{\end{definitions}}
	\newcommand{\ba}{\begin{array}}
		\newcommand{\ea}{\end{array}}
	\newcommand{\beqq}{\begin{eqnarray*}}
		\newcommand{\eeqq}{\end{eqnarray*}}
	\newcommand{\bee}{\begin{enumerate}}
		\newcommand{\eee}{\end{enumerate}}
	\newcommand{\bei}{\begin{itemize}}
		\newcommand{\eei}{\end{itemize}}
	\newcommand{\bed}{\begin{description}}
		\newcommand{\eed}{\end{description}}
	\newcommand{\bo}{\begin{obser}}
		\newcommand{\eo}{\end{obser}}
	\newcommand{\bos}{\begin{obsers}}
		\newcommand{\eos}{\end{obsers}}
	\newcommand{\ID}{{\mathbb D}}
	\numberwithin{equation}{section}
	\newcommand{\D}{\mathbb{D}}
\newcounter{alphabet}
\newenvironment{Thm}[1][]{\refstepcounter{alphabet}%
	\bigskip%
	\noindent%
	{\bf Theorem \Alph{alphabet}}%
	\ifthenelse{\equal{#1}{}}{}{ (#1)}%
	{\bf .} \itshape}{\vskip 8pt}
\begin{document}

\bibliographystyle{amsplain}
\title{$H^p$-Norm estimates of the partial derivatives and Schwarz lemma for $\alpha$-harmonic functions}

\def\thefootnote{}
\footnotetext{ \texttt{\tiny  \number\day-\number\month-\number\year
          }
} \makeatletter\def\thefootnote{\@arabic\c@footnote}\makeatother

\newcommand{\ben}{\begin{enumerate}}
	\newcommand{\een}{\end{enumerate}}

\subjclass[2010]{Primary: 30C62, 31A05; Secondary: 30H10, 30H20.}
 \keywords{Poisson integral, $\alpha$-harmonic mapping,  Hardy  space, Bergman  space}

\author{Adel Khalfallah}
\address{Department of Mathematics, King Fahd University of Petroleum and
	Minerals, Dhahran 31261, Saudi Arabia}\email{khelifa@kfupm.edu.sa}

\author{ Miodrag Mateljevi\'c}
\address{M. Mateljevi\'c, Faculty of mathematics, University of Belgrade, Studentski Trg 16, Belgrade, Republic of Serbia}
\email{miodrag@matf.bg.ac.rs}
\begin{abstract}
Suppose $\alpha>-1$ and $1\leq p \leq \infty$. Let $f=P_{\alpha}[F]$ be an $\alpha$-harmonic mapping on $\D$ with the boundary $F$ being absolute continuous  and $\dot{F}\in L^p(0,2\pi)$, where $\dot{F}(e^{i\theta}):=\frac{dF(e^{i\theta})}{d\theta}$. In this paper, we investigate the membership of $f_z$ and $f_{\overline{z}}$ in the space $\mathcal{H}_{\mathcal{G}}^{p}(\mathbb{D})$, the generalized Hardy space. We prove, if $\alpha>0$, then both $f_z$ and $f_{\overline{z}}$ are in $\mathcal{H}_{\mathcal{G}}^{p}(\mathbb{D})$. If $\alpha<0$, then $f_z$ and $f_{\overline{z}}\in \mathcal{H}_{\mathcal{G}}^{p}(\mathbb{D})$ if and only if $f$ is analytic. Finally, we investigate a Schwartz Lemma for $\alpha$-harmonic functions.
\end{abstract}

\maketitle \pagestyle{myheadings}
\markboth{ A. Khalfallah  and M Mateljevi\'c}{}

\section{Preliminaries}

We denote by $\D$ 
  the unit disk and   $\T:=\partial\D$  the
unit circle.
For $z=x+iy\in\mathbb{C}$, the two complex
differential operators are defined by

$$u_z=\partial_z(u)=\frac{1}{2}\left(u_ x-iu_ y\right)~\mbox{and}~
u_{\bar z}=\overline{\partial}_z(u)=\frac{1}{2}\left( u_x+iu_y\right),$$
where $u$ is a complex valued function on $\D$.

\subsection{ $\alpha$-harmonic functions}\hfill

 For $\alpha > -1$, we denote by $\Delta _{\alpha}$ the weighted Laplace operator corresponding to the so-called standard weight $w_{\alpha}(z)=(1-|z|^{2})^{\alpha}$, that is, for $z\in \mathbb{D}$, \beq\label{eq0.1}
\Delta_{\alpha,z}=\partial_{z} w_{\alpha} (z)^{-1} \overline{\partial}_{z}. 
\eeq

The weighted Laplacians of the form \eqref{eq0.1} were first systematically studied by 
 Garabedian in \cite{Ga}.

Let $g\in\mathcal{C}(\mathbb{D})$
and $f\in\mathcal{C}^{2}(\mathbb{D})$. Of particular interest to us is the following {\it inhomogeneous $\alpha$-harmonic equation} in $\mathbb{D}$:
\beq\label{eq1.1}
\Delta_{\alpha}(f)=g.
\eeq
We also consider the associated \emph{Dirichlet boundary value problem} of functions $f$, satisfying the equation \eqref{eq1.1},
\beq\label{eq1.2}
\left\{
\begin{aligned}
\Delta_{\alpha}(f)&=g\;\;\;\;\text{in}\;\mathbb{D},\\
                 f&=f^{\ast}\;\;\text{on}\;\mathbb{T}.
\end{aligned}
\right.
\eeq
Here the boundary data $f^{\ast}$ is a distribution on $\mathbb{T}$, i.e. $f^{\ast}\in \mathcal{D}'(\mathbb{T})$,  and the boundary condition in \eqref{eq1.2} is  understood as $f_{r}\to f^{\ast}\in \mathcal{D}'(\mathbb{T})$ as $r \to 1^{-}$, where

$$
f_{r}(e^{i\theta}):=f(re^{i\theta}),\quad r\in[0,1).
$$

If $g\equiv0$ in \eqref{eq1.1}, the solutions to \eqref{eq1.1} are said to be {\it$\alpha$-harmonic functions}. Obviously, $0$-harmonic functions are simply  harmonic functions. 

In \cite{oA}, Olofsson and Wittsten showed that if an $\alpha$-harmonic function $f$ satisfies
$$\lim_{r\to 1^{-}}f_{r}=f^{\ast} \in \mathcal{D}'(\mathbb{T}) \;\; (\alpha > -1),$$ then it has the form of a \emph{Poisson type integral}
\beq\label{eq1.4}
f(z)=\mathcal{P}_{\alpha}[f^{\ast}](z)=\frac{1}{2\pi}\int^{2\pi}_{0}P_{\alpha}(ze^{-i\theta})f^{\ast}(e^{i\theta})d\theta
\eeq in $\mathbb{D}$,
where
$$P_{\alpha}(z)=\frac{(1-|z|^{2})^{\alpha+1}}{(1-z)(1-\overline{z})^{\alpha+1}}$$
is the {\it $\alpha$-harmonic Poisson kernel} in $\ID$.

Moreover, 
$$
P_\alpha(z)=\sum_{k=-\infty}^{\infty} e_{\alpha,k}(z), \quad z\in \D,
$$
where 
$$ e_{\alpha,k}(z)=z^k, \quad k\geq 0,$$ and 
$$ e_{\alpha,-k}(z)=\frac{1}{B(k,\alpha+1)}  \left( \int_0^1 t^{k-1} (1-t|z|^2)^{\alpha}dt\right) \overline{z}^{k}, \quad k\geq 1, $$ see \cite{oA}. 

In particular, we obtain

$$e_{\alpha,k}(z)= \mathcal{P}_\alpha[e^{ik\theta}](z), \quad  k\in \mathbb{Z}.  $$

In addition, we have
\be\label{1.5}
\overline{\partial} e_{\alpha,-k}(z)= \frac{1}{B(k,\alpha+1)} (1-|z|^2)^\alpha\,  \zb^{k-1}, \quad k\geq 1,
\ee
where $B$ is the beta function, see \cite[Lemma 1.1]{oA}.\\

\noindent Recall that  $$M_{\alpha}(r)=\frac{1}{2\pi}\int_{\T} |P_{\alpha}(ze^{-it})|dt.$$
 The mapping $M_\alpha$ is increasing on $[0,1)$ and
 \be\label{calpha}
 \lim_{r\to 1}M_{\alpha}(r)=c_\alpha=\frac{\Gamma(\alpha+1)}{\Gamma^2(\alpha/2+1)}.
\ee

\noindent See \cite{LWX,olo} for related discussions.\\

\subsection{Generalized Hardy spaces}\hfill

For $p\in(0,\infty]$, the {\it generalized Hardy space
$\mathcal{H}^{p}_{\mathcal{G}}(\mathbb{D})$} consists of all
measurable  functions from $\mathbb{D}$ to $\mathbb{C}$ such that
$M_{p}(r,f)$ exists for all $r\in(0,1)$, and $ \|f\|_{p}<\infty$,
where
$$M_{p}(r,f)=\left(\frac{1}{2\pi}\int_{0}^{2\pi}|f(re^{i\theta})|^{p}\,d\theta\right)^{\frac{1}{p}}
$$
and
$$\|f\|_{p}=
\begin{cases}
\displaystyle\sup \{M_{p}(r,f):\; 0<r <1\}
& \mbox{if } p\in(0,\infty),\\
\displaystyle\sup\{|f(z)|:\; z\in\mathbb{D}\} &\mbox{if } p=\infty.
\end{cases}$$

The classical {\it Hardy space $\mathcal{H}^{p}(\mathbb{D})$}  (resp. $h^{p}(\mathbb{D})$) is the set of  all  elements of $\mathcal{H}^{p}_{\mathcal{G}}(\mathbb{D})$ which are analytic (resp. harmonic) on $\D$. (cf. \cite{Du,Du1}).
Obviously, $\mathcal{H}^{p}_{\mathcal{G}}(\mathbb{D})\subset L^p(\mathbb{D})$ for each $p\in(0,\infty]$.
We denote by  $h^p_\alpha(\D)$  the corresponding Hardy space for $\alpha$-harmonic functions.\\

Denote by $L^{p}(\mathbb{T})~(p\in[1,\infty])$ the space of all measurable functions  $F$
of $\mathbb{T}$ into $\mathbb{C}$ with

$$\|F\|_p=
\begin{cases}
\displaystyle\left(\frac{1}{2\pi}\int_{0}^{2\pi}|F(e^{i\theta})|^{p}d\theta\right)^{\frac{1}{p}}
& \mbox{if } p\in[1,\infty),\\
\displaystyle \sup\{|F(e^{i\theta})|:\; \theta\in [0, 2\pi)\} &\mbox{if } p=\infty.\\
\end{cases}
$$

It is well-known that if $F$ is absolutely continuous, then it is of bounded variation. This implies that for
almost all $e^{i\theta}\in\mathbb{T}$, the derivative $\dot{F}(e^{i\theta})$ exists, where
$$\dot{F}(e^{i\theta}):=\frac{dF(e^{i\theta})}{d\theta}.$$

In this paper, we consider the following problem :\\

For $\alpha>-1$, under what conditions on the boundary function $F$ ensure that the
partial derivatives of its $\alpha$-harmonic extension $f=\mathcal{P}_\alpha[F]$, i.e., $f_z$ and $f_{\overline{z}}$, are in the space $\mathcal{H}_{\mathcal{G}}^{p}(\mathbb{D})$ (or $L^{p}(\mathbb{D})$), where $p\in  [1,\infty]$?\\

It is worth noting that a similar problem for harmonic functions  was treated in \cite{Zhu} and improved in \cite{CPW,KM}. 

\begin{Thm}{\rm (\cite[Theorem 1.2]{Zhu}),\cite[Theorem 1.1]{CPW})}\label{Zhu-1}
Suppose that $f=P[F]$ is a harmonic mapping in $\mathbb{D}$ and
$\dot{F}\in L^{p}(\mathbb{T})$, where $F$ is an  absolutely continuous  function. \ben
\item
If $p\in[1,\infty)$, then both $f_{z}$ and $\overline{f_{\overline{z}}}$ are in $L^{p}(\mathbb{D}).$
\item
If $p=\infty$, then there exists a harmonic
mapping $f=P[F]$, with  $\dot{F}\in L^{\infty}(\mathbb{T})$, such that
neither $f_{z}$ nor $\overline{f_{\overline{z}}}$ is in $L^{\infty}(\mathbb{D}).$
\een
\end{Thm}

Furthermore, under some additional conditions of $f$, they proved that the  partial derivatives are in $H^p(\D)$, for $p\in[1,\infty]$.

\begin{Thm}{\rm (\cite[Theorem 1.3]{Zhu},\cite[Theorem 1.2]{CPW})}\label{Zhu-2}
Suppose that $p\in[1,\infty]$ and $f=P[F]$ is a $(K,K')$-elliptic mapping in $\mathbb{D}$ with
$\dot{F}\in L^{p}(\mathbb{T})$, where $F$ is an absolute continuous function, $K\geq1$ and $K'\geq0$. Then both
$f_{z}$ and $\overline{f_{\overline{z}}}$ are in $H^{p}(\mathbb{D}).$
\end{Thm}

In \cite{CPW}, the authors showed  that Theorem B also holds true for harmonic elliptic mappings, which are more general than harmonic quasiregular mapping.\\

In \cite{KM},  we provide  a refinement of the two previous theorems, we prove that for $1<p<\infty$, both
$f_{z}$ and $\overline{f_{\overline{z}}}$ are in $H^{p}(\mathbb{D})$ without any extra conditions on 
$f$.

\begin{Thm}\cite{KM}
Suppose that $F$ is an  absolute continuous  function on $\T$ and  $f=P[F]$ is a harmonic mapping in $\mathbb{D}$ and
$\dot{F}\in L^{p}(\mathbb{T})$.
\ben
\item
If $p\in(1,\infty)$, then both $f_{z}$ and $\overline{f_{\overline{z}}}$ are in $\mathcal{H}^{p}(\mathbb{D}).$ Moreover, there exists a constant $A_p$ such that
$$ \max(\|f_z \|_p, \|\overline{f_{\overline{z}}} \|_p) \leq A_p \| \dot{F}\|_p .$$

\item If $p=1$ or $p=\infty$, then both  
  $f_{z} $ and  $\overline{f_{\overline{z}}}$ are in $\mathcal{H}^p(\mathbb{D})$  if and only if  $H(\dot{F}) \in L^p(\T)$. Moreover,
  $$2izf_z(z)=P[\dot{F}+iH(\dot{F})](z),$$
\een
where $H(\dot{F})$ denotes the Hilbert transformation of $\dot{F}$.
\end{Thm}

\section{Main results}

Assume $z=re^{i\theta}\in \D$, the polar partial derivative of $f$ with respect to $\theta$ is given by :

\be
\partial_\theta f  =i(zf_z-\bar{z}f_{\bar{z}}).
\ee

\subsection{Estimate of the angular derivative $\partial_{\theta}f$, $f$ is $\alpha$-harmonic.}

 \begin{thm}\label{thm2.1}
 	Suppose $\alpha>-1$ and $1\leq p \leq \infty$. Let $f=P_{\alpha}[F]$ be an $\alpha$-harmonic mapping on $\D$ with the boundary $F$ being absolute continuous  with $\dot{F}\in L^p(0,2\pi)$. Then
 	$$\partial_\theta f= \mathcal{P}_\alpha[{\dot{F}}]  \in h_\alpha^p(\D)$$
 	and
 	$$\|\partial_\theta  f \|_{p} \leq c_\alpha \|\dot{F}\|_{p},$$
where $c_\alpha$ is defined in (\ref{calpha}). 	
  \end{thm}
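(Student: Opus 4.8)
The plan is to pass the angular derivative through the Poisson--$\alpha$ integral and onto the boundary data by an integration by parts, and then to estimate the resulting $\alpha$-harmonic Poisson integral through the convolution (Young/Minkowski) inequality on the circle.

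Writing $z=re^{i\theta}$ and using the representation \eqref{eq1.4} with $t$ as the integration variable,
$$ f(re^{i\theta})=\frac{1}{2\pi}\int_0^{2\pi}P_\alpha\big(re^{i(\theta-t)}\big)F(e^{it})\,dt . $$
For fixed $r\in[0,1)$ the map $t\mapsto P_\alpha(re^{i(\theta-t)})$ is smooth and $2\pi$-periodic, so one may differentiate under the integral sign; the key observation is that, since the kernel depends only on the difference $\theta-t$,
$$ \partial_\theta P_\alpha\big(re^{i(\theta-t)}\big)=-\,\partial_t P_\alpha\big(re^{i(\theta-t)}\big). $$
Because $F$ is absolutely continuous and $2\pi$-periodic, integrating by parts in $t$ makes the boundary term vanish and produces $\dot F$, giving
$$ \partial_\theta f(re^{i\theta})=\frac{1}{2\pi}\int_0^{2\pi}P_\alpha\big(re^{i(\theta-t)}\big)\dot F(e^{it})\,dt=\mathcal{P}_\alpha[\dot F](re^{i\theta}). $$
This is the first assertion; in particular $\partial_\theta f$ is the Poisson--$\alpha$ extension of the $L^1$ datum $\dot F$ and is therefore $\alpha$-harmonic.

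For the norm estimate I read the last identity as a normalized circular convolution of $\dot F$ against the kernel $s\mapsto P_\alpha(re^{is})$. Young's convolution inequality (equivalently Minkowski's integral inequality), valid for every $p\in[1,\infty]$, then yields
$$ M_p(r,\partial_\theta f)\le\Big(\frac{1}{2\pi}\int_0^{2\pi}\big|P_\alpha(re^{is})\big|\,ds\Big)\|\dot F\|_p=M_\alpha(r)\,\|\dot F\|_p, $$
where I have used that, after a periodic change of variable, the $L^1$-norm of the kernel is exactly $M_\alpha(r)$. Since $M_\alpha$ is increasing on $[0,1)$ with $\lim_{r\to1}M_\alpha(r)=c_\alpha$ by \eqref{calpha}, we get $M_\alpha(r)\le c_\alpha$ for all $r$, hence $\|\partial_\theta f\|_p\le c_\alpha\|\dot F\|_p<\infty$ and $\partial_\theta f\in h_\alpha^p(\D)$.

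The substantive ingredients are the antisymmetry $\partial_\theta=-\partial_t$ on the kernel and the identification of the convolution kernel's $L^1$-norm with $M_\alpha(r)$, which together produce the clean constant $c_\alpha$. The only points demanding care are routine: justifying differentiation under the integral for fixed $r<1$ (uniform continuity of the kernel and of its $\theta$-derivative on compact circles) and the integration by parts for a merely absolutely continuous, rather than $C^1$, boundary function $F$.
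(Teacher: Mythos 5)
Your proposal is correct and follows essentially the same route as the paper: the same differentiation under the integral, the antisymmetry $\partial_\theta P_\alpha(ze^{-it})=-\partial_t P_\alpha(ze^{-it})$, and integration by parts using absolute continuity to get $\partial_\theta f=\mathcal{P}_\alpha[\dot F]$, followed by the bound $M_p(r,\partial_\theta f)\le M_\alpha(r)\|\dot F\|_p$ and the monotonicity of $M_\alpha$ with limit $c_\alpha$. The only (cosmetic) difference is that you cite Young's/Minkowski's convolution inequality for the $L^p$ estimate, whereas the paper derives the same bound by hand via Jensen's inequality and Fubini's theorem.
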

\begin{pf}
Assume that $f$ is $\alpha$-harmonic function and
 $$f=\mathcal{P}_{\alpha}[F],$$ with $F$ is absolute continuous and $\dot{F}\in L^p(0,2\pi)$, with $p\geq 1$.
 $$f(z)=\mathcal{P}_\alpha [F](z)=\frac{1}{2\pi} \int_\T  P_\alpha(ze^{-it})F(e^{it})\, dt.$$
 For $z=re^{i\theta}$, we have
 $$\partial_{\theta} P_\alpha (ze^{-it})= -\partial_{t} P_\alpha (ze^{-it}).$$
By integration by parts, we deduce
\be\label{eq:1}
\partial_\theta f= \mathcal{P}_\alpha[{\dot{F}}].
\ee

\noindent Using Jensen's inequality, we have
 $$|\partial_\theta f(re^{i\theta})|^p\leq M^{p-1}_\alpha(r) \frac{1}{2\pi}\int_0^{2\pi} |P_\alpha(ze^{-it})| |\dot{F}(e^{it})|^p \, dt.$$

\noindent Using Fubini theorem, we obtain
 $$\frac{1}{2\pi} \int_{0}^{2\pi}  |\partial_\theta f(re^{i\theta})|^p d\theta \leq M^p_{\alpha}(r) \|\dot{F}\|_{L^p}^p.$$
 Thus,
 $$\sup_{0<r<1} \frac{1}{2\pi} \int_{0}^{2\pi}  |\partial_\theta f (re^{i\theta})|^p d\theta \leq c^p_{\alpha} \|\dot{F}\|_{L^p}^p.$$

 For $p=\infty$, it is clear that
 $$|\partial_\theta f(re^{i\theta})| \leq M_\alpha(r) \|\dot{F}\|_\infty \leq c_\alpha \|\dot{F}\|_\infty.$$
Therefore, we obtain the desired result.
 \end{pf}

 \begin{lem}
Let $\alpha>-1$ and  $f=\mathcal{P}_{\alpha}[F]$ be an $\alpha$-harmonic mapping on $\D$ with the boundary function $F$ being absolute continuous. Then 
 \be
\zb \overline{\partial}f(z)= -   \frac{1}{2\pi i}\int_{0}^{2\pi} \frac{(1-|z|^2)^\alpha}{(1-\zb e^{it})^{\alpha+1}} \,  \dot{F}(e^{it})\, dt.
\ee
 \end{lem}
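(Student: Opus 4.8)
The plan is to differentiate the Poisson-type integral under the integral sign and then transfer the $t$-derivative onto $F$ by integration by parts. First I would write $\overline\partial f(z)=\frac{1}{2\pi}\int_{0}^{2\pi}\overline\partial_z\!\left[P_\alpha(ze^{-it})\right]F(e^{it})\,dt$, which is legitimate because $P_\alpha(ze^{-it})$ and its $\bar z$-derivative are continuous in $z$ and locally bounded on $\D$ uniformly in $t$; the care needed here is to justify differentiation under the integral via the locally uniform convergence of the defining series for $P_\alpha$.

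The key computation is a closed form for $\overline\partial P_\alpha$. Since the analytic terms $e_{\alpha,k}(z)=z^{k}$, $k\geq 0$, are annihilated by $\overline\partial$, formula \eqref{1.5} gives $\overline\partial P_\alpha(z)=(1-|z|^2)^\alpha\sum_{k\geq 1}\frac{1}{B(k,\alpha+1)}\zb^{\,k-1}$. Writing $B(k,\alpha+1)=\Gamma(k)\Gamma(\alpha+1)/\Gamma(k+\alpha+1)$ and reindexing, I would recognize the sum as the generalized binomial series, obtaining $\sum_{k\geq 1}\frac{1}{B(k,\alpha+1)}w^{k-1}=(\alpha+1)(1-w)^{-(\alpha+2)}$, and hence $\overline\partial P_\alpha(z)=(\alpha+1)(1-|z|^2)^\alpha(1-\zb)^{-(\alpha+2)}$. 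Applying the chain rule to the argument $ze^{-it}$ (so that $\overline\partial_z=e^{it}(\overline\partial P_\alpha)(ze^{-it})$, since $|ze^{-it}|=|z|$ and $\overline{ze^{-it}}=\zb e^{it}$) then yields $\overline\partial_z\!\left[P_\alpha(ze^{-it})\right]=(\alpha+1)(1-|z|^2)^\alpha\,e^{it}(1-\zb e^{it})^{-(\alpha+2)}$.

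Multiplying by $\zb$, I would observe the algebraic identity $(\alpha+1)\zb e^{it}(1-\zb e^{it})^{-(\alpha+2)}=\frac{1}{i}\,\frac{d}{dt}(1-\zb e^{it})^{-(\alpha+1)}$, so that the integrand becomes an exact $t$-derivative up to the factor $(1-|z|^2)^\alpha/(2\pi)$. Integrating by parts in $t$ and using that both $F(e^{it})$ and $(1-\zb e^{it})^{-(\alpha+1)}$ are $2\pi$-periodic (so the boundary term drops), the derivative falls onto $F$ and produces $\dot{F}$. Using finally $1/i=-i$ to reconcile the constant, this gives exactly $\zb\,\overline\partial f(z)=-\frac{1}{2\pi i}\int_{0}^{2\pi}\frac{(1-|z|^2)^\alpha}{(1-\zb e^{it})^{\alpha+1}}\,\dot{F}(e^{it})\,dt$, as claimed.

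The two places needing genuine care are the summation step and the integration by parts. For the former I would note that the binomial series converges locally uniformly on $\D$, which simultaneously legitimizes the term-by-term application of $\overline\partial$ and its interchange with the outer integral over $t$. For the latter, the absolute continuity of $F$ is precisely the hypothesis ensuring that $\dot{F}$ exists almost everywhere and that the integration-by-parts formula $\int_{0}^{2\pi}G'(t)F\,dt=-\int_{0}^{2\pi}G(t)\dot{F}\,dt$ holds; this is the only point where the assumption on $F$ enters, and it constitutes the main (though mild) obstacle.
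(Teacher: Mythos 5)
Your proof is correct and follows essentially the same route as the paper: obtain the closed form $\overline\partial P_\alpha(z)=(\alpha+1)(1-|z|^2)^\alpha(1-\zb)^{-(\alpha+2)}$, differentiate under the integral sign with the chain rule producing the factor $e^{it}$, recognize the integrand (after multiplying by $\zb$) as an exact $t$-derivative, and integrate by parts using the absolute continuity and periodicity of $F$. The only difference is cosmetic: the paper gets $\overline\partial P_\alpha$ by directly differentiating the closed-form kernel, whereas you sum the series of derivatives from \eqref{1.5} via the binomial expansion; both give the same formula and the remainder of the argument is identical.
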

 
 \begin{pf}
 Let $\alpha>-1$, an easy computation shows that 

$$\overline{\partial} P_\alpha(z)=\frac{\alpha+1}{(1-\zb)^2} \left(\frac{1-|z|^2}{1-\zb}\right)^\alpha=(\alpha+1)\frac{(1-|z|^2)^\alpha}{(1-\zb)^{\alpha+2}}.$$
 As 
$f(z)=\frac{1}{2\pi} \int_0^{2\pi} P_\alpha(ze^{-it}) F(e^{it})\, dt,$ we obtain 

$$\overline{\partial}f(z)=\frac{1}{2\pi} \int_0^{2\pi}  \overline{\partial}[P_\alpha(ze^{-it})] F(e^{it})\, dt=\frac{1}{2\pi} \int_0^{2\pi}  \overline{\partial}P_\alpha (ze^{-it}) e^{it} F(e^{it})\,dt. $$
Hence,

$$ \overline{\partial} f(z)=(\alpha+1)(1-|z|^2)^\alpha\frac{1}{2\pi} \int_0^{2\pi}\frac{e^{it}}{(1-\zb e^{it})^{\alpha+2}}\, F(e^{it})\, dt. $$

Using integration by parts, we obtain
\be\label{eq:2}
 \overline{\partial} f(z)= -\frac{1}{\zb} \frac{1}{2\pi i}\int_{0}^{2\pi} \frac{(1-|z|^2)^\alpha}{(1-\zb e^{it})^{\alpha+1}} \dot{F}(e^{it})\, dt.
\ee

 \end{pf}
 
 Denote
$$ I_\alpha(r)=\frac{1}{2\pi} \int_0^{2\pi} \frac{(1-|z|^2)^\alpha}{|1-\zb e^{it}|^{\alpha+1}} dt,$$
where $r=|z|.$

\begin{lem}\cite[Proposition 1.1]{KMM}\label{lem2:2}
	Let $r\in[0,1)$.
	\begin{enumerate}
		\item If $\ds\alpha >0$, then 	
		\begin{equation*}
			I_{\alpha} (r) \leq\frac{\Gamma(\alpha)}{\Gamma^2(\alpha/2+1/2)}.
		\end{equation*}
		\item If $\ds -1<\alpha<0$, then
		\begin{equation*}
			I_{\alpha} (r) \leq\frac{\Gamma(-\alpha)}{\Gamma^2(1/2-\alpha/2)}
			(1-r^2)^\alpha.
		\end{equation*}
	\end{enumerate}
	
\end{lem}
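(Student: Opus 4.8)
The plan is to evaluate $I_\alpha(r)$ essentially in closed form as a Gauss hypergeometric function and then read off the two bounds from the monotonicity of a power series with nonnegative coefficients, together with the value of that series at the endpoint $r=1$. First I would use the rotational invariance of the integrand: writing $z=re^{i\theta}$ one has $|1-\zb e^{it}|=|1-re^{i(t-\theta)}|$, so after the substitution $t\mapsto t+\theta$ the integral is independent of $\theta$ and we may take $z=r\in[0,1)$, giving
$$I_\alpha(r)=\frac{(1-r^2)^\alpha}{2\pi}\int_0^{2\pi}\frac{dt}{|1-re^{it}|^{\alpha+1}}.$$
Setting $\beta=(\alpha+1)/2$ and factoring $|1-re^{it}|^{-2\beta}=(1-re^{it})^{-\beta}(1-re^{-it})^{-\beta}$, I would expand both factors by the binomial series; for $r<1$ these converge absolutely, so Fubini applies and only the diagonal terms survive the $t$-integration. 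This yields
$$\frac{1}{2\pi}\int_0^{2\pi}\frac{dt}{|1-re^{it}|^{\alpha+1}}=\sum_{n=0}^\infty\left(\frac{(\beta)_n}{n!}\right)^2 r^{2n}={}_{2}F_{1}\!\left(\tfrac{\alpha+1}{2},\tfrac{\alpha+1}{2};1;r^2\right),$$
where $(\beta)_n$ is the Pochhammer symbol, so that $I_\alpha(r)=(1-r^2)^\alpha\,{}_{2}F_{1}\!\left(\tfrac{\alpha+1}{2},\tfrac{\alpha+1}{2};1;r^2\right)$.

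For the case $-1<\alpha<0$ the estimate is then immediate. Since the coefficients $((\beta)_n/n!)^2$ are perfect squares, hence nonnegative, the hypergeometric factor is nondecreasing in $r$ and is dominated by its value at $r=1$. Because $1-2\beta=-\alpha>0$, Gauss's summation theorem applies and gives
$${}_{2}F_{1}\!\left(\tfrac{\alpha+1}{2},\tfrac{\alpha+1}{2};1;1\right)=\frac{\Gamma(-\alpha)}{\Gamma^2\!\left(\tfrac{1-\alpha}{2}\right)}=\frac{\Gamma(-\alpha)}{\Gamma^2(1/2-\alpha/2)},$$
which is exactly the constant in part (2) after multiplying back by $(1-r^2)^\alpha$.

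For $\alpha>0$ the weight $(1-r^2)^\alpha$ must first be absorbed. I would apply Euler's transformation ${}_{2}F_{1}(a,b;c;x)=(1-x)^{c-a-b}\,{}_{2}F_{1}(c-a,c-b;c;x)$ with $a=b=(\alpha+1)/2$ and $c=1$, for which $c-a-b=-\alpha$; this cancels the weight and leaves $I_\alpha(r)={}_{2}F_{1}\!\left(\tfrac{1-\alpha}{2},\tfrac{1-\alpha}{2};1;r^2\right)$. This series again has squared, hence nonnegative, coefficients, so it is nondecreasing in $r$, and now the relevant parameter is $c-a-b=\alpha>0$, so Gauss's theorem gives the endpoint value $\Gamma(\alpha)/\Gamma^2((1+\alpha)/2)=\Gamma(\alpha)/\Gamma^2(\alpha/2+1/2)$, which is the constant in part (1).

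The only points requiring genuine care are bookkeeping: justifying the term-by-term integration (absolute convergence for $r<1$) and verifying that the hypothesis $c-a-b>0$ of Gauss's theorem holds in each regime — namely $-\alpha>0$ when $-1<\alpha<0$ before the transformation, and $\alpha>0$ after it. The conceptually pleasant feature that makes the monotonicity free of charge is that in both regimes the governing power series has coefficients that are perfect squares; this is precisely what reduces each bound to a single endpoint evaluation via Gauss's summation formula, and it is why no additional maximum-principle or differentiation argument is needed.
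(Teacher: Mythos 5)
Your proof is correct. Note, however, that the paper does not prove this lemma at all: it imports it by citation from \cite[Proposition 1.1]{KMM}, so there is no internal proof to compare against. Your argument supplies a complete, self-contained derivation, and it follows what is essentially the standard (and almost certainly the cited reference's) route: reduce by rotation invariance to $z=r$, expand $|1-re^{it}|^{-(\alpha+1)}=(1-re^{it})^{-\beta}(1-re^{-it})^{-\beta}$ with $\beta=(\alpha+1)/2$ to get the exact identity $I_\alpha(r)=(1-r^2)^\alpha\,{}_2F_1\bigl(\beta,\beta;1;r^2\bigr)$, and then invoke Gauss's summation. All the delicate points are handled properly: for $-1<\alpha<0$ the parameter check $c-a-b=-\alpha>0$ legitimizes the endpoint evaluation, and for $\alpha>0$ the Euler transformation $(1-x)^{c-a-b}\,{}_2F_1(c-a,c-b;c;x)$ absorbs the weight $(1-r^2)^\alpha$ exactly, after which $c-a-b=\alpha>0$ again makes Gauss's theorem applicable; in both regimes the nonnegativity of the squared Taylor coefficients gives monotonicity in $r$, so the bound is the value at $r=1$. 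One could add a one-line remark that when $(1-\alpha)/2$ is a nonpositive integer the transformed series terminates, but Gauss's formula (Chu--Vandermonde) still yields the stated constant, so nothing breaks; this is a cosmetic point, not a gap.
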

 
 \begin{prop}\label{prop2:2}	Suppose $\alpha>-1$ and $1\leq p \leq \infty$. Let $f=\mathcal{P}_\alpha[F]$ be an $\alpha$-harmonic mapping on $\D$ with  $F$ is absolute continuous and satisfies $\dot{F}\in L^p(0,2\pi)$. Then
 
     $$M_p(r,\zb \overline{\partial}f(z))  \leq I_\alpha(|z|) \|\dot{F}\|_{p}.$$
 \end{prop}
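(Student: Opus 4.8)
The plan is to start from the integral representation for $\zb\,\overline{\partial}f(z)$ established in the preceding lemma, namely
\[
\zb\,\overline{\partial}f(z)= -\frac{1}{2\pi i}\int_{0}^{2\pi}\frac{(1-|z|^2)^\alpha}{(1-\zb e^{it})^{\alpha+1}}\,\dot{F}(e^{it})\,dt,
\]
and to estimate its $L^p$ mean $M_p(r,\cdot)$ over the circle $|z|=r$. The mirror of the argument in the proof of Theorem~\ref{thm2.1} should carry over: the kernel here is $K_\alpha(z,t):=\dfrac{(1-|z|^2)^\alpha}{(1-\zb e^{it})^{\alpha+1}}$, whose modulus integrates (against $dt/2\pi$) to exactly $I_\alpha(|z|)$ by the definition of $I_\alpha$. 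So the quantity $I_\alpha(r)$ plays precisely the role that $M_\alpha(r)$ played in Theorem~\ref{thm2.1}, and I expect an identical Jensen--Fubini two-step bound.

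Concretely, for $1\le p<\infty$ I would first apply Jensen's inequality to the probability measure $d\mu_z(t)=\dfrac{|K_\alpha(z,t)|\,dt}{2\pi I_\alpha(|z|)}$ on $[0,2\pi)$. Writing $|\zb\,\overline{\partial}f(z)|\le \dfrac{1}{2\pi}\int_0^{2\pi}|K_\alpha(z,t)|\,|\dot F(e^{it})|\,dt = I_\alpha(r)\int_0^{2\pi}|\dot F(e^{it})|\,d\mu_z(t)$, the convexity of $s\mapsto s^p$ gives
\[
|\zb\,\overline{\partial}f(z)|^p \le I_\alpha(r)^p\int_0^{2\pi}|\dot F(e^{it})|^p\,d\mu_z(t)
= I_\alpha(r)^{p-1}\,\frac{1}{2\pi}\int_0^{2\pi}|K_\alpha(z,t)|\,|\dot F(e^{it})|^p\,dt.
\]
Then I would integrate in $\theta$ over the circle and apply Fubini to swap the $\theta$- and $t$-integrations. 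The inner integral $\frac{1}{2\pi}\int_0^{2\pi}|K_\alpha(re^{i\theta},t)|\,d\theta$ should again equal $I_\alpha(r)$ by rotational invariance (the modulus $|1-\zb e^{it}|$ depends only on $\theta-t$, so integrating in $\theta$ reproduces the same average as integrating in $t$). This produces $M_p(r,\zb\,\overline{\partial}f)^p\le I_\alpha(r)^{p}\,\|\dot F\|_p^p$, i.e.\ the claimed bound after taking $p$-th roots. For the endpoint $p=\infty$ the estimate is immediate: bounding $|\dot F(e^{it})|\le\|\dot F\|_\infty$ inside the integral and pulling it out leaves exactly $I_\alpha(r)\|\dot F\|_\infty$.

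The one point that genuinely needs care, rather than being a routine transcription, is the rotational-invariance step justifying that $\frac{1}{2\pi}\int_0^{2\pi}|K_\alpha(re^{i\theta},t)|\,d\theta = I_\alpha(r)$. Here $I_\alpha(r)$ is defined with a fixed $z$ and an integration in $t$, whereas after Fubini I need an integration in $\theta$ with $t$ fixed; I would verify that $|K_\alpha|$ depends on $z$ and $e^{it}$ only through $\zb e^{it}$ (equivalently $|1-\zb e^{it}|$ depends only on $\theta-t$ and $r$), so both integrals are the same $2\pi$-periodic average and coincide. This is the main—if modest—obstacle; once it is settled the combination of Jensen and Fubini closes the estimate exactly as in Theorem~\ref{thm2.1}, and Lemma~\ref{lem2:2} can then be invoked downstream to convert the bound $M_p(r,\zb\,\overline{\partial}f)\le I_\alpha(r)\|\dot F\|_p$ into explicit decay estimates in the two regimes $\alpha>0$ and $-1<\alpha<0$.
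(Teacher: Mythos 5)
Your proposal is correct and follows essentially the same route as the paper: take the modulus of the integral representation from the preceding lemma, apply Jensen's inequality with the kernel-weighted probability measure to get $|\zb\,\overline{\partial}f(z)|^p \leq I_\alpha^{p-1}(r)\,\frac{1}{2\pi}\int_0^{2\pi}|K_\alpha(z,t)|\,|\dot F(e^{it})|^p\,dt$, then integrate in $\theta$ and use Fubini. The rotational-invariance point you flag (that $|1-\zb e^{it}|$ depends only on $\theta-t$, so the $\theta$-average of the kernel equals $I_\alpha(r)$) is exactly the justification the paper leaves implicit in its Fubini step, and your separate treatment of $p=\infty$ is a harmless addition.
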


\noindent  We  use the equation (\ref{eq:2}) and apply  Jensen's inequality to estimate the mean of the function  $\zb \overline{\partial}f(z)$.
 
\begin{proof}
 Using (\ref{eq:2}), we have

\be\label{eq:4}
|\zb \overline{\partial}f(z)| \leq \frac{1}{2\pi} \int_0^{2\pi} \frac{(1-|z|^2)^\alpha}{|1-\zb e^{it}|^{\alpha+1}} \,  |\dot{F}(e^{it})|\, dt.
\ee

Using Jensen's inequality, we get

$$|\zb \overline{\partial}f(z)|^p \leq I_\alpha^{p-1}(r)  \frac{1}{2\pi}\int_0^{2\pi} \frac{(1-|z|^2)^\alpha}{|1-\zb e^{it}|^{\alpha+1}} \,  |\dot{F}(e^{it})|^p\, dt.$$

Applying  Fubini's theorem, we obtain

\be \label{eq 5:5}
M_p^p(r,\zb \overline{\partial}f(z))  \leq I_\alpha^p(|z|) \|\dot{F}\|^{p}_{p}.
\ee
\end{proof}

\begin{cor}\label{prop2:3}
     If $\alpha>0$ and $f=\mathcal{P}_\alpha [F]$ and $F$ is absolute continuous such that $\dot{F}\in L^p$ with $1\leq p\leq \infty$. Then  $\zb \overline{\partial}f\in H_g^p(\D)$ and 
$$ \| \zb \overline{\partial}f(z)\|_p \leq  \frac{\Gamma(\alpha)}{\Gamma^2(\alpha/2+1/2)} \|\dot{F}\|_p.$$
\end{cor}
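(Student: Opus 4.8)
The plan is to deduce this statement directly from Proposition \ref{prop2:2} combined with the integral estimate in Lemma \ref{lem2:2}, so that essentially all of the work has already been done in the two preceding results. Proposition \ref{prop2:2} already supplies, for every $z\in\D$ with $r=|z|$, the bound
$$M_p(r,\zb\overline{\partial}f(z)) \leq I_\alpha(|z|)\,\|\dot{F}\|_p .$$
Thus the only remaining task is to control $I_\alpha(r)$ uniformly in $r\in[0,1)$, and this is exactly where the hypothesis $\alpha>0$ enters.

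Accordingly, I would invoke part (1) of Lemma \ref{lem2:2}, which gives the $r$-independent estimate $I_\alpha(r)\leq \Gamma(\alpha)/\Gamma^2(\alpha/2+1/2)$. Substituting this into the displayed inequality yields, for every $r\in(0,1)$,
$$M_p(r,\zb\overline{\partial}f(z)) \leq \frac{\Gamma(\alpha)}{\Gamma^2(\alpha/2+1/2)}\,\|\dot{F}\|_p .$$
Taking the supremum over $r\in(0,1)$ on the left-hand side then produces two conclusions at once: first, the finiteness of this supremum, which by the very definition of the generalized Hardy space is the membership $\zb\overline{\partial}f\in\mathcal{H}^p_{\mathcal{G}}(\D)$; and second, the asserted norm inequality $\|\zb\overline{\partial}f\|_p \leq \Gamma(\alpha)/\Gamma^2(\alpha/2+1/2)\,\|\dot{F}\|_p$.

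There is no genuine obstacle in this final step: the substantive content — the integration-by-parts representation \eqref{eq:2} of $\overline{\partial}f$, the Jensen–Fubini argument yielding \eqref{eq 5:5}, and the beta-function computation bounding $I_\alpha$ — is entirely contained in the lemma and proposition one may assume. The single conceptual point worth flagging is that the bound on $I_\alpha(r)$ is uniform in $r$ \emph{precisely} because $\alpha>0$; in the range $-1<\alpha<0$ the companion estimate in Lemma \ref{lem2:2}(2) carries a factor $(1-r^2)^\alpha$ that diverges as $r\to1^-$, so the same reasoning fails. This is the structural reason the corollary is stated only for positive $\alpha$ and why the negative range demands the separate treatment announced in the abstract.
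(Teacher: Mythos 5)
Your proposal is correct and coincides with the paper's own proof: both combine Proposition \ref{prop2:2} with part (1) of Lemma \ref{lem2:2} to obtain the uniform bound $M_p(r,\zb\overline{\partial}f) \leq \frac{\Gamma(\alpha)}{\Gamma^2(\alpha/2+1/2)}\|\dot{F}\|_p$ and then take the supremum over $r$. Your additional remark explaining why the argument fails for $-1<\alpha<0$ (the $(1-r^2)^\alpha$ blow-up in Lemma \ref{lem2:2}(2)) is accurate and consistent with the paper's separate treatment of that range.
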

\begin{proof}
Let $\alpha>0$, by Proposition \ref{prop2:2}  and Lemma \ref{lem2:2}, we deduce that 
$$M_p(r,\zb \overline{\partial}f(z))   \leq I_\alpha(r) \|\dot{F}\|_p\leq \frac{\Gamma(\alpha)}{\Gamma^2(\alpha/2+1/2)} \|\dot{F}\|_{p}.$$
Therefore
$$ \| \zb \overline{\partial}f(z)\|_p \leq  \frac{\Gamma(\alpha)}{\Gamma^2(\alpha/2+1/2)} \|\dot{F}\|_{p}.$$
\end{proof}
Combining the previous results with the identity
\be\label{eq:2.8}
iz\partial =\partial_\theta+i\zb\bar{\partial},
\ee
 we obtain our first main result 
\begin{thm}
Let $\alpha \in (-1,\infty)$ with $\alpha\not=0$ and let $f=P_\alpha[F]$ and $F$ is absolute continuous  such that $\dot{F}\in L^p$ with $1\leq p\leq \infty$.
\begin{enumerate}
    \item  If $\alpha>0$, then
 $\overline{\partial}f$ and $\partial f$ are in  $H_{\mathcal{G}}^p(\D)\subset L^p(\D)$.
 \item 
If $\alpha\in (-1,0)$, then $\partial f$ and $\bar{\partial} f$ are in $L^p(\D)$ for $p<- \frac{1}{\alpha}$.

\item For $\alpha \in (-1,0)$ and  $p\geq -\frac{1}{\alpha}$ there exits $f$ an  $\alpha$-harmonic function such that $\partial f$   and $\bar{\partial} f$ $\not\in L^p(\D)$, moreover,  $\partial f$   and $\bar{\partial} f$ $\not\in H_{\mathcal{G}}^1(\D)$.
\end{enumerate}
  \end{thm}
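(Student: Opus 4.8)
The plan is to treat the three parts by transferring everything to the two quantities $\zb\overline{\partial}f$ and $\partial_\theta f$, which the earlier results already control, and then to recover $\overline{\partial}f$ and $\partial f$ by dividing out the factors $\zb$ and $z$. For part (1), Corollary \ref{prop2:3} gives $\zb\overline{\partial}f\in H_{\mathcal{G}}^p(\D)$, while Theorem \ref{thm2.1} gives $\partial_\theta f\in h_\alpha^p(\D)\subset H_{\mathcal{G}}^p(\D)$; feeding both into the identity \eqref{eq:2.8}, namely $iz\partial f=\partial_\theta f+i\zb\overline{\partial}f$, shows $z\partial f\in H_{\mathcal{G}}^p(\D)$. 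Since $|z|=|\zb|=r$ is constant on each circle $|z|=r$, one has $M_p(r,\partial f)=r^{-1}M_p(r,z\partial f)$ and $M_p(r,\overline{\partial}f)=r^{-1}M_p(r,\zb\overline{\partial}f)$; on $r\ge 1/2$ the factor $r^{-1}$ is harmless, and on $r\le 1/2$ the functions $\partial f,\overline{\partial}f$ are continuous (as $f\in C^2(\D)$), so both $\partial f$ and $\overline{\partial}f$ lie in $H_{\mathcal{G}}^p(\D)\subset L^p(\D)$.

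For part (2) I would start from Proposition \ref{prop2:2} combined with the case $-1<\alpha<0$ of Lemma \ref{lem2:2}, which yields $M_p^p(r,\zb\overline{\partial}f)\le C_\alpha^p\,(1-r^2)^{\alpha p}\|\dot F\|_p^p$. Writing the area integral in polar coordinates and using $|\overline{\partial}f|=r^{-1}|\zb\overline{\partial}f|$, the norm $\|\overline{\partial}f\|_{L^p(\D)}^p$ is controlled, away from a disc about the origin where $f$ is smooth, by a constant times $\int_{1/2}^1 (1-r^2)^{\alpha p}\,r\,dr$. This radial integral converges exactly when $\alpha p>-1$, i.e. $p<-1/\alpha$, which is the stated range, so $\overline{\partial}f\in L^p(\D)$. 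The identity \eqref{eq:2.8} then gives $z\partial f=\zb\overline{\partial}f-i\partial_\theta f$, and since $\partial_\theta f\in h_\alpha^p(\D)\subset L^p(\D)$ by Theorem \ref{thm2.1}, the same division argument yields $\partial f\in L^p(\D)$.

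Part (3) is where sharpness must be exhibited, and I expect the construction to be the main point. I would take the explicit $\alpha$-harmonic function $f=e_{\alpha,-1}=\mathcal{P}_\alpha[e^{-i\theta}]$, whose boundary datum $F(e^{i\theta})=e^{-i\theta}$ is absolutely continuous with $\dot F=-ie^{-i\theta}\in L^\infty\subset L^p$. Formula \eqref{1.5} gives the clean expression $\overline{\partial}f(z)=(\alpha+1)(1-|z|^2)^\alpha$, so $\|\overline{\partial}f\|_{L^p(\D)}^p$ is a positive constant times $\int_0^1(1-r^2)^{\alpha p}r\,dr$, which \emph{diverges} precisely for $\alpha p\le -1$, i.e. $p\ge -1/\alpha$; moreover $M_1(r,\overline{\partial}f)=(\alpha+1)(1-r^2)^\alpha\to\infty$ as $r\to1$, so $\overline{\partial}f\notin H_{\mathcal{G}}^1(\D)$. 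To see that $\partial f$ fails as well, I would again invoke \eqref{eq:2.8}: here $\partial_\theta f=\mathcal{P}_\alpha[\dot F]=-ie_{\alpha,-1}$ is bounded on $\D$, so $z\partial f=\zb\overline{\partial}f-i\partial_\theta f$ differs from $\zb\overline{\partial}f$ by a bounded term; for $r$ near $1$ the blow-up of $|\zb\overline{\partial}f|=r(\alpha+1)(1-r^2)^\alpha$ dominates, giving the pointwise lower bound $|\partial f|\ge \frac{1}{2}(\alpha+1)(1-r^2)^\alpha$, whence $\partial f\notin L^p(\D)$ for $p\ge-1/\alpha$ and $\partial f\notin H_{\mathcal{G}}^1(\D)$ by the same $M_1$ estimate. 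The only delicate points are the uniform domination argument transferring the singularity from $\overline{\partial}f$ to $\partial f$, and the bookkeeping that pins the threshold at exactly $p=-1/\alpha$; everything else is a routine combination of the lemmas above.
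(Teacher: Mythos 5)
Your proposal is correct and follows essentially the same route as the paper: parts (1) and (2) come from combining Theorem \ref{thm2.1}, Corollary \ref{prop2:3}, Proposition \ref{prop2:2} and Lemma \ref{lem2:2} with the identity \eqref{eq:2.8}, and part (3) uses exactly the paper's counterexample $e_{\alpha,-1}=\mathcal{P}_\alpha[e^{-i\theta}]$ with $\overline{\partial}e_{\alpha,-1}(z)=(\alpha+1)(1-|z|^2)^\alpha$. If anything, your write-up is more complete than the paper's: the paper only verifies $\overline{\partial}e_{\alpha,-1}\notin L^p(\D)$ and is silent about $\partial e_{\alpha,-1}$, whereas your domination argument via \eqref{eq:2.8} (boundedness of $\partial_\theta f$ versus the $(1-r^2)^\alpha$ blow-up of $\zb\,\overline{\partial}f$) supplies that missing step.
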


\begin{proof}
  (1) and (2) are direct consequences of the previous discussion and  Proposition \ref{prop2:2}   and Lemma \ref{lem2:2}.\\
  (3) Let  $\alpha \in (-1,0)$ and  consider $H(e^{i\theta})=e^{-i\theta} $. We have $e_{\alpha,-1}(z)=P_\alpha[H](z)$ and 
  $$\overline{\partial} e_{\alpha,-1}(z)=(\alpha+1) (1-|z|^2)^\alpha.$$
  Hence $\overline{\partial} e_{\alpha,-1} \not \in L^p(\D)$ for $p\geq -\frac{1}{\alpha}$ and $\overline{\partial} e_{\alpha,-1} \not \in H^1_{\mathcal{G}}(\D)$.
\end{proof}

Our second main result is the following 

\begin{thm}
Let $\alpha \in (-1,0)$ and let $f=\mathcal{P}_\alpha[F]$ and $F$ is absolute continuous  such that $\dot{F}\in L^p$ with $1\leq p\leq \infty$.
Then 
    $\overline{\partial}f$ or $\partial f$ is in  $H_{\mathcal{G}}^p(\D)$, if and only if,   $f$ is analytic. 
\end{thm}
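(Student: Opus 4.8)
The plan is to isolate a single holomorphic function governing the anti-holomorphic part of $f$ and to exploit that, since $\alpha<0$, the weight $(1-r^2)^\alpha$ blows up as $r\to 1^-$. The reverse implication needs no work: if $f$ is analytic then $\overline{\partial}f\equiv 0\in H_{\mathcal{G}}^p(\D)$, so one of the two derivatives belongs to $H_{\mathcal{G}}^p(\D)$. For the forward implication I would first prove the core statement that $\overline{z}\,\overline{\partial}f\in H_{\mathcal{G}}^p(\D)$ already forces $f$ to be analytic, and then deduce both cases of the theorem from it.

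To set this up, I would start from (\ref{eq:2}) and multiply by $\overline{z}$ to write
$$\overline{z}\,\overline{\partial}f(z)=-\frac{1}{2\pi i}\int_{0}^{2\pi}\frac{(1-|z|^2)^\alpha}{(1-\overline{z}e^{it})^{\alpha+1}}\,\dot{F}(e^{it})\,dt=(1-|z|^2)^\alpha\,h(\overline{z}),$$
where $h(w):=-\frac{1}{2\pi i}\int_{0}^{2\pi}(1-we^{it})^{-(\alpha+1)}\dot{F}(e^{it})\,dt$. The function $h$ is holomorphic on $\D$: for each $t$ the integrand is holomorphic and nonvanishing in $w$ for $|w|<1$, and $\dot{F}\in L^1$ justifies differentiation under the integral sign. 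Since $(1-|z|^2)^\alpha>0$ and $\overline{z}\neq 0$ off the origin, the displayed identity yields the key equivalence: $f$ is analytic, i.e. $\overline{\partial}f\equiv 0$, precisely when $h\equiv 0$.

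The heart of the argument is a growth dichotomy. With $z=re^{i\theta}$ one computes $M_p(r,\overline{z}\,\overline{\partial}f)=(1-r^2)^\alpha M_p(r,h)$. If $h\not\equiv 0$, then by the identity theorem $M_p(r,h)>0$ for every $r\in(0,1)$, and by Hardy's convexity theorem (the maximum principle when $p=\infty$) the mean $M_p(r,h)$ is non-decreasing in $r$; hence it stays bounded below by the positive constant $M_p(1/2,h)$ for $r\geq 1/2$. Because $\alpha<0$ makes $(1-r^2)^\alpha\to\infty$, we obtain $\sup_{0<r<1}(1-r^2)^\alpha M_p(r,h)=\infty$, so $\overline{z}\,\overline{\partial}f\notin H_{\mathcal{G}}^p(\D)$. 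Contrapositively, $\overline{z}\,\overline{\partial}f\in H_{\mathcal{G}}^p(\D)$ forces $h\equiv 0$, i.e. $f$ analytic; this is the core statement.

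It remains to feed each hypothesis of the theorem into this core statement. If $\overline{\partial}f\in H_{\mathcal{G}}^p(\D)$, then since $M_p(r,\overline{z}\,\overline{\partial}f)=r\,M_p(r,\overline{\partial}f)\leq M_p(r,\overline{\partial}f)$ we immediately get $\overline{z}\,\overline{\partial}f\in H_{\mathcal{G}}^p(\D)$, whence $f$ is analytic. If instead $\partial f\in H_{\mathcal{G}}^p(\D)$, I would invoke the identity $iz\partial f=\partial_\theta f+i\overline{z}\,\overline{\partial}f$ from (\ref{eq:2.8}): multiplication by $z$ keeps $iz\partial f$ in $H_{\mathcal{G}}^p(\D)$, while Theorem \ref{thm2.1} guarantees $\partial_\theta f=\mathcal{P}_\alpha[\dot{F}]\in h^p_\alpha(\D)\subset H_{\mathcal{G}}^p(\D)$; subtracting shows $\overline{z}\,\overline{\partial}f\in H_{\mathcal{G}}^p(\D)$, and again $f$ is analytic. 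The main obstacle is the growth dichotomy: everything hinges on $h$ being genuinely holomorphic, so that $M_p(r,h)$ cannot decay, together with $(1-r^2)^\alpha\to\infty$ — which is exactly where the hypothesis $\alpha<0$ enters and where the corresponding $\alpha>0$ conclusion fails.
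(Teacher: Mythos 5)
Your proof is correct, but the core of your forward implication runs on a genuinely different engine than the paper's. The paper argues coefficient-by-coefficient: it expands $f=\sum_{n\geq 1}a_{-n}e_{\alpha,-n}+\sum_{n\geq 0}a_n z^n$, extracts $f_n(z)=\frac{1}{2\pi}\int_0^{2\pi}f(ze^{it})e^{int}\,dt=a_{-n}e_{\alpha,-n}(z)$, notes via \eqref{1.5} that $\bar{\partial}f_n(z)=\frac{a_{-n}}{B(n,\alpha+1)}(1-|z|^2)^{\alpha}\bar{z}^{\,n-1}$ is simultaneously an angular average of $\bar{\partial}f$ (hence has bounded means when $\bar{\partial}f\in H^p_{\mathcal{G}}(\D)$), and concludes $a_{-n}=0$ for every $n\geq 1$ because $(1-r^2)^{\alpha}$ blows up. You instead factor the whole anti-analytic content at once, writing $\bar{z}\,\bar{\partial}f(z)=(1-|z|^2)^{\alpha}h(\bar{z})$ with $h$ holomorphic via \eqref{eq:2}, and kill $h$ globally using the identity theorem plus Hardy's convexity (monotonicity of $M_p(r,h)$). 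The two proofs share the same skeleton elsewhere: the reduction of the $\partial f$ case through \eqref{eq:2.8} and Theorem \ref{thm2.1} is identical, and the converse is trivial in both (the paper additionally notes $f'=-\frac{i}{z}P[\dot{F}]\in H^p(\D)$, which you don't need since $\bar{\partial}f\equiv 0$ already witnesses the ``or''). What your route buys: it leans only on the already-proved integral formula \eqref{eq:2} and classical Hardy-space facts, it treats $1\leq p\leq\infty$ in one stroke, and it makes fully precise the passage from ``bounded $L^p$ means'' to vanishing — a point the paper states somewhat loosely as ``$a_{-n}(1-|z|^2)^{\alpha}$ is bounded,'' which for $p<\infty$ really means bounded means rather than pointwise bounds. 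What the paper's route buys: it exposes the structural reason for the dichotomy (every negative-index mode $e_{\alpha,-n}$ individually carries the unbounded weight), and it sidesteps any discussion of branches and differentiation under the integral sign needed to certify that your $h$ is holomorphic — though your justification of that point is adequate.
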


\begin{proof}
  Consider the series expansion  of $f$
  
  $$
  f(z)= \sum_{n=1}^\infty a_{-n} e_{\alpha,-n}(z)+ \sum_{n=0}^\infty a_{n} z^n.
  $$

For $n\geq 1$, let
$$f_n(z):= \frac{1}{2\pi}\int_0^{2\pi} f(ze^{it}) e^{int}dt, \quad z\in \D.$$

It yields
$$
f_n(z)=a_{-n} e_{\alpha,-n}(z).
$$

Hence
$$
\bar{\partial}f_n(z)= a_{-n}\bar{\partial}e_{\alpha,-n}(z)= \frac{1}{2\pi} \int_0^{2\pi} \bar{\partial f}(ze^{it})e^{i(n-1)t}
dt.$$

First, assume that $\overline{\partial} f \in H_{\mathcal{G}}^p(\D)$, then by (\ref{1.5}), it yields
$
a_{-n}(1-|z|^2)^{\alpha} 
$ is bounded, this implies that $a_{-n}=0$ for $n \geq 1$ and $f$ is analytic. 

In the case $\partial{f}\in H_{\mathcal{G}}^p(\D)$, we deduce that  $\overline{\partial} f \in H_{\mathcal{G}}^p(\D)$. Indeed, by   (\ref{eq:2.8}) and Theorem \ref{thm2.1}, we obtain
$$i\zb \overline{\partial} f(z) =i z\partial f(z) - \partial_\theta f(z) \in  H_{\mathcal{G}}^p(\D).$$

 Conversely,  assume that $f$ is analytic, then $f=P_0[F]$
 and  $f'(z)=-\frac{i}{z} \partial_\theta f (z)=-\frac{i}{z}P[\dot{F}]\in H^p(\D).$
 \end{proof}

\section{Schwartz Lemma for $\alpha$-harmonic functions}

The Schwarz lemma for analytic functions plays a vital role in complex analysis and has been generalized to various spaces of functions.

Heinz \cite{Heinz}  generalized it to the class of complex-valued harmonic functions, that
is, if $f$ is a complex-valued harmonic function from $\D$ into itself with $f(0) = 0$, then for $z \in \D$,
$$
\ f(z)|\leq \frac{4}{\pi}
\arctan|z|.
$$
Hethcote \cite{Heth} improved Heinz’s result, by removing the assumption $f(0) = 0$, i .e., let $f$ be a harmonic function from $\D$ to $\D$, then

$$
\left| f(z)-\frac{1-|z|^2}{1+|z|^2} f(0) \right| \leq  \frac{4}{\pi}
\arctan|z|.
$$

In \cite{Pli}, Li et al.  stated a Schwarz type lemma for solutions of the $\alpha$-harmonic equation $\Delta_\alpha(f)=g$, with the condition $\alpha \geq 0$. For $g=0$, they obtain

\begin{Thm}\cite[Theorem 2.1]{Pli}
Let $\alpha \geq 0$ and $f^*\in \mathcal{C}(\T)$ and $f=P_\alpha[f^*]$  an $\alpha$-harmonic function on $\D$ such that $f(0)=0$. Then 
$$
|f(z)|\leq 2^\alpha \frac{4}{\pi} |f^*|_\infty \arctan |z|.
$$
\end{Thm}

In the proof, the authors used the assumption $P_\alpha[|f^*|](0)=0$ instead of $P_\alpha[f^*](0)=0$. Clearly the assumption  $P_\alpha[|f^*|](0)=0$ implies that $f^*=0$ and thus $f=0$. In order to fill this gap, in \cite{LC}, the authors proved the following 

\begin{Thm}\cite[Theorem 1.1]{LC} Suppose that $f^*\in \mathcal{C}(\T)$ and $f=P_\alpha[f^*]$. 
\begin{enumerate}
    \item[(a)] If $\alpha \geq 0$, then
    $$
    |f(z)|\leq 2^{\alpha+1} \frac{|f|_\infty}{\pi} \arctan \left(\frac{1+|z|}{1-|z|} \tan \frac{c\pi}{2} \right).
    $$
      \item[(b)] If $\alpha <0 $, then
    $$
    |f(z)|\leq  2^{1-\alpha} \frac{|f|_\infty}{\pi} (1-|z|^2)^\alpha \arctan \left(\frac{1+|z|}{1-|z|} \tan \frac{c\pi}{2} \right),
    $$
\end{enumerate}
where  $\displaystyle c=\frac{P_\alpha[|f^*|](0)}{|f^*|_\infty}$.
\end{Thm}

In our opinion, it is more natural to provide Schwarz type lemmas for $\alpha$-harmonic functions involving $f(0)$  instead of  $P_\alpha[|f^*|](0)$.\\

The aim of this section is to prove a Schwarz type lemma for $\alpha$-harmonic functions  from the unit disc to itself in the spirit of Hethcote \cite{Heth}.\\

Let $$ g_\alpha(z):= \left(\frac{1-|z|^2}{1-\overline{z}} \right)^\alpha ,$$
so
 \be
P_\alpha(z)=\left( \frac{1-|z|^2}{1-\overline{z}} \right)^\alpha \frac{1-|z|^2}{|1-z|^2}= g_\alpha(z)  P(z),
\ee
where
$$P(z)=\frac{1-|z|^2}{|1-z|^2},$$
is the classical Poisson kernel.\\

Let 
$$|g_\alpha(r)|_\infty =\sup_{0\leq \theta\leq 2\pi} |g_\alpha(re^{i\theta})|.$$
One can check that 
\be \label{gr1}
|g_\alpha(r)|_\infty \leq 2^\alpha, \quad \mbox{for} \quad  \alpha \geq 0,
\ee
and 
\be\label{gr2}
|g_\alpha(r)|_\infty \leq (1-r)^\alpha, \quad \mbox{for}\quad  \alpha<0.
\ee
Let $f:\D \to \D$ be an $\alpha$-harmonic mapping from the unit disc to itself. Then, we can write
$$f(z)=\frac{1}{2\pi}\int_\T P_\alpha(ze^{-it}) f^*(e^{it})\, dt, $$
where $f^*$ is the boundary function of $f$, with $|f^*|_\infty \leq 1.$

Let
$$G_\alpha(z):=g_\alpha[f] (z)= \frac{1}{2\pi}\int_\T g_\alpha(ze^{-it}) f^*(e^{it})\, dt= \frac{(1-|z|^2)^\alpha}{2\pi}  \int_\T \frac{ f^*(e^{it})}{(1-\zb e^{it})^\alpha}\, dt.$$

Remark, in the case $\alpha=0$, $G_0(z)=f(0)$.

\begin{lem} Let $\alpha> -1$ and $f$ be an $\alpha$-harmonic function from the unit disc to itself. Then

$$\left| f(z)-\frac{1-|z|^2}{1+|z|^2}G_\alpha(z)\right| \leq \frac{4}{\pi}|g_\alpha(|z|)|_\infty \arctan|z|.$$
In particular 
\begin{enumerate}
    \item If $\alpha\geq 0$, then 
	$$\left| f(z)-\frac{1-|z|^2}{1+|z|^2}G_\alpha(z)\right| \leq \frac{2^{\alpha+2}}{\pi} \arctan|z|.$$
	\item If $\alpha< 0$, then 
	$$\left| f(z)-\frac{1-|z|^2}{1+|z|^2}G_\alpha(z)\right| \leq \frac{4}{\pi} (1-|z|)^\alpha\arctan|z|.$$
\end{enumerate}
\end{lem}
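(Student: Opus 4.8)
The plan is to follow Hethcote's classical argument for harmonic maps, the only new ingredient being the factorization $P_\alpha(z)=g_\alpha(z)P(z)$ recorded above. The first step is to write the left-hand side as a single integral against the boundary datum $f^*$. Using $f(z)=\frac{1}{2\pi}\int_\T g_\alpha(ze^{-it})P(ze^{-it})f^*(e^{it})\,dt$ together with the definition of $G_\alpha$, subtraction gives
$$f(z)-\frac{1-|z|^2}{1+|z|^2}G_\alpha(z)=\frac{1}{2\pi}\int_\T g_\alpha(ze^{-it})\Big[P(ze^{-it})-\frac{1-|z|^2}{1+|z|^2}\Big]f^*(e^{it})\,dt.$$

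Next I would estimate crudely. Since $|f^*|_\infty\leq1$ and $|g_\alpha(ze^{-it})|\leq|g_\alpha(|z|)|_\infty$ (as $t$ varies, $ze^{-it}$ traces the whole circle of radius $|z|$, so the modulus of $g_\alpha$ is at most its supremum there), taking absolute values under the integral sign yields
$$\Big|f(z)-\frac{1-|z|^2}{1+|z|^2}G_\alpha(z)\Big|\leq|g_\alpha(|z|)|_\infty\cdot\frac{1}{2\pi}\int_\T\Big|P(ze^{-it})-\frac{1-|z|^2}{1+|z|^2}\Big|\,dt.$$
This decouples the problem: the factor $|g_\alpha(|z|)|_\infty$ carries all the $\alpha$-dependence, while the remaining integral is exactly the one appearing in the classical ($\alpha=0$) Hethcote estimate and does not involve $\alpha$.

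The key computation, which I expect to be the only genuinely nontrivial step, is the evaluation
$$\frac{1}{2\pi}\int_0^{2\pi}\Big|P(re^{-it})-\frac{1-r^2}{1+r^2}\Big|\,dt=\frac{4}{\pi}\arctan r,\qquad r=|z|.$$
By rotational invariance one may take $z=r$ real; a short computation gives $P(re^{-it})-\frac{1-r^2}{1+r^2}=\frac{2r(1-r^2)\cos t}{(1+r^2)(1-2r\cos t+r^2)}$, so the integrand changes sign precisely where $\cos t$ does. Splitting the integral at $t=\pm\pi/2$ and integrating the resulting elementary rational expression in $\cos t$ produces the arctangent and the constant $4/\pi$. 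This is exactly Hethcote's lemma and may simply be cited.

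Finally I would assemble the pieces. Combining the last two displays gives the main inequality $|f(z)-\frac{1-|z|^2}{1+|z|^2}G_\alpha(z)|\leq\frac{4}{\pi}|g_\alpha(|z|)|_\infty\arctan|z|$. The two special cases then follow by inserting the bounds (\ref{gr1}) and (\ref{gr2}): for $\alpha\geq0$ one substitutes $|g_\alpha(|z|)|_\infty\leq 2^\alpha$ to obtain the constant $2^{\alpha+2}/\pi$, and for $\alpha<0$ one substitutes $|g_\alpha(|z|)|_\infty\leq(1-|z|)^\alpha$ to obtain the factor $(1-|z|)^\alpha$. No work remains beyond these substitutions.
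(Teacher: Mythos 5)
Your proposal is correct and follows essentially the same route as the paper: the factorization $P_\alpha(ze^{-it})=g_\alpha(ze^{-it})P(ze^{-it})$, the subtraction producing a single integral against $f^*$, the pointwise bound $|g_\alpha(ze^{-it})|\leq|g_\alpha(|z|)|_\infty$, the citation of Hethcote's estimate $\frac{1}{2\pi}\int_\T\bigl|P(ze^{-it})-\frac{1-|z|^2}{1+|z|^2}\bigr|\,dt\leq\frac{4}{\pi}\arctan|z|$, and finally the substitutions (\ref{gr1}) and (\ref{gr2}). The only difference is that you additionally sketch how Hethcote's integral is evaluated, which the paper simply cites.
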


\begin{proof}
We have

$$\left| f(z)-\frac{1-|z|^2}{1+|z|^2}G_\alpha(z)\right| \leq \frac{1}{2\pi} \int_\T  \left| P(ze^{-it})-\frac{1-|z|^2}{1+|z|^2}  \right| \left|g_\alpha (ze^{-it}) f^*(e^{it})\ \right|\, dt  $$

$$ \leq \frac{4}{\pi} |f^*|_\infty |g_\alpha(|z|)_\infty \arctan |z|.$$

Indeed, by Hethcote \cite{Heth}, it yields  $\displaystyle \frac{1}{2\pi}\int_\T \left| P(ze^{-it})-\frac{1-|z|^2}{1+|z|^2} \right| dt \leq \frac{4}{\pi}\arctan |z|. $
The conclusion follows from the inequalities (\ref{gr1}) and (\ref{gr2}).
\end{proof}

\begin{lem}Let $\alpha>-1$ and $r\in [0,1)$. Then
	$$\frac{1}{2\pi} \int_\T \left|  (1-r e^{it})^{-\alpha}-1 \right|\, dt  \leq  \left| (1-r)^{-\alpha}-1 \right|.$$
\end{lem}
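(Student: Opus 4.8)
The plan is to reduce both sides to the single analytic function $h(z):=(1-z)^{-\alpha}-1$, which is holomorphic on $\D$ (the principal branch is well defined there since $\re(1-z)>0$ for every $z\in\D$) and satisfies $h(0)=0$. Writing $r=|z|$, the right-hand side is precisely $|h(r)|$, the value of $|h|$ at the real point $z=r$, while the left-hand side is the integral mean $M_1(r,h)=\frac{1}{2\pi}\int_0^{2\pi}|h(re^{it})|\,dt$. Thus the assertion is simply that $M_1(r,h)\le |h(r)|$, and I would prove the stronger pointwise statement that $|h(re^{it})|\le |h(r)|$ for all $t$.

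First I would expand $h$ by the binomial series,
$$
(1-z)^{-\alpha}=\sum_{n=0}^\infty \frac{(\alpha)_n}{n!}\,z^n,\qquad (\alpha)_n:=\alpha(\alpha+1)\cdots(\alpha+n-1),
$$
so that $h(z)=\sum_{n=1}^\infty \frac{(\alpha)_n}{n!}\,z^n$. The decisive observation is that, because $\alpha>-1$, all coefficients of $h$ share one common sign: for $\alpha>0$ every factor of $(\alpha)_n$ is positive, whence $(\alpha)_n>0$; for $-1<\alpha<0$ the first factor $\alpha$ is negative while the remaining factors $\alpha+1,\dots,\alpha+n-1$ are positive (it is exactly here that $\alpha>-1$ is used, to guarantee $\alpha+1>0$), whence $(\alpha)_n<0$ for all $n\ge 1$. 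Let $\epsilon\in\{+1,-1\}$ denote this common sign, so that $\epsilon h$ has nonnegative Taylor coefficients.

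Consequently, for every $t$,
$$
|h(re^{it})|=\Bigl|\sum_{n\ge 1}\epsilon\,\tfrac{(\alpha)_n}{n!}\,r^n e^{int}\Bigr|
\le \sum_{n\ge 1}\Bigl|\tfrac{(\alpha)_n}{n!}\Bigr|\,r^n
=\epsilon\sum_{n\ge 1}\tfrac{(\alpha)_n}{n!}\,r^n=\epsilon\,h(r)=|h(r)|,
$$
with equality attained at $t=0$. Averaging over $t$ (indeed, simply bounding the mean by this maximum) gives $M_1(r,h)\le |h(r)|$, which is the claim; the case $\alpha=0$ is trivial since then $h\equiv 0$.

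The only genuine content is the sign determination of the Pochhammer symbol $(\alpha)_n$, and this is precisely the step where the hypothesis $\alpha>-1$ is indispensable: it forces, in the negative range, only the single leading factor $\alpha$ to be negative, so that the coefficients do not alternate and $\epsilon h$ has nonnegative coefficients. Everything else is the elementary principle that an analytic function whose Taylor coefficients all carry the same sign attains the maximum of its modulus on a circle at the positive real point, and I expect the verification of that sign pattern to be the main (and essentially the sole) obstacle.
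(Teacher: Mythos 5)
Your proof is correct, but it takes a genuinely different route from the paper's. Both arguments establish the stronger pointwise bound $\left|(1-re^{it})^{-\alpha}-1\right|\le\left|(1-r)^{-\alpha}-1\right|$ for every $t$ and then average, but the mechanisms differ. The paper works with $u(r,t)=(1-re^{it})^{-\alpha}$, writes $u(r,t)-u(0,t)=\int_0^r u_x(x,t)\,dx$, and bounds the integrand via $|u_x(x,t)|\le|\alpha|\,(1-x)^{-\alpha-1}$, which follows from $|1-xe^{it}|\ge 1-x$ together with the hypothesis $\alpha>-1$ (so that the exponent $-\alpha-1$ is negative); integrating this bound gives $\frac{|\alpha|}{\alpha}\left((1-r)^{-\alpha}-1\right)=\left|(1-r)^{-\alpha}-1\right|$. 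You instead expand $h(z)=(1-z)^{-\alpha}-1$ in the binomial series and observe that $\alpha>-1$ forces all Pochhammer coefficients $(\alpha)_n$, $n\ge1$, to share one sign, so the triangle inequality collapses to $|h(re^{it})|\le \epsilon h(r)=|h(r)|$. Your sign analysis is right ($\alpha+1>0$ is exactly what prevents alternation in the range $-1<\alpha<0$), and the series converges absolutely on $|z|<1$, so there is no gap. What each approach buys: the paper's calculus argument is self-contained and avoids any series manipulation; yours is more conceptual, exhibiting the lemma as an instance of the general principle that a power series with same-sign coefficients maximizes its modulus on circles at the positive real point, and it meshes naturally with the expansion $\frac{1}{(1-\zb e^{it})^\alpha}=\sum_{n\ge0}\frac{(\alpha)_n}{n!}\zb^n e^{int}$ that the paper itself invokes at the end of Section 3.
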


\begin{pf}
Set $u(r,t)=(1-r e^{it})^{-\alpha}.$
Since  $ u_r(r,t)=  \alpha e^{it} (1-r
e^{it})^{-\alpha-1}$, and $-\alpha-1<0$, we have  $$|u_r (r,t)|\leq |\alpha| ( 1- r)^{-\alpha-1},\quad r\in [0,1] \mbox{ and } t\in \mathbb{R}.$$
 Next,     using  $u(r,t)- u(0,t)= \int_0^r u_x (x,t) dx$, we obtain

$|u(r,t)- u(0,t)|\leq  \int_0^r |u'_x (x,t)| dx\leq  \int_0^r |\alpha |(1-
x)^{-\alpha-1} dx = \frac{|\alpha|}{\alpha}( (1-  r)^{-\alpha}-1)$    and therefore 
  $$\frac{1}{2\pi} \int_\T \left|  (1-r e^{it})^{-\alpha}-1 \right|\, dt  \leq \left| ( 1- r)^{-\alpha}-1\right|.$$
\end{pf}

Combining the previous two lemmas, we obtain the main result of this section.

\begin{thm}
	Let $\alpha >-1$ and $f:\D \to \D$ be an $\alpha$-harmonic function. Then
	\begin{enumerate}
	    \item If $\alpha\geq 0$, then
		$$\left| f(z)-\frac{(1-|z|^2)^{\alpha+1}}{1+|z|^2}f(0)\right| \leq  \frac{2^{\alpha+2}}{\pi} \arctan|z|+   2^{\alpha+1} (1-|z|) \left(1-(1-|z|)^\alpha \right).$$
		\item If $\alpha< 0$, then
		$$\left| f(z)-\frac{(1-|z|^2)^{\alpha+1}}{1+|z|^2}f(0)\right| \leq  \frac{4}{\pi} (1-|z|)^\alpha\arctan|z|+ \left((1-|z|)^\alpha-1 \right).$$
\end{enumerate}

\end{thm}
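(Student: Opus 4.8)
The plan is to bridge the two preceding lemmas by a triangle inequality, so that the only genuinely new work is to compare the quantity $\frac{1-|z|^2}{1+|z|^2}G_\alpha(z)$ controlled by the first lemma with the quantity $\frac{(1-|z|^2)^{\alpha+1}}{1+|z|^2}f(0)$ appearing in the statement. Since $P_\alpha(0)=1$, one has $f(0)=\frac{1}{2\pi}\int_\T f^*(e^{it})\,dt$, and recalling $G_\alpha(z)=\frac{(1-|z|^2)^\alpha}{2\pi}\int_\T \frac{f^*(e^{it})}{(1-\zb e^{it})^\alpha}\,dt$, I would subtract the two expressions and pull out the common factor to write
$$\frac{1-|z|^2}{1+|z|^2}G_\alpha(z)-\frac{(1-|z|^2)^{\alpha+1}}{1+|z|^2}f(0)=\frac{(1-|z|^2)^{\alpha+1}}{(1+|z|^2)\,2\pi}\int_\T\big((1-\zb e^{it})^{-\alpha}-1\big)f^*(e^{it})\,dt.$$

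Next I would take absolute values, use $|f^*|_\infty\le 1$, and rotate the integration variable (writing $z=re^{i\phi}$ and substituting $s=t-\phi$) so the integral becomes $\frac{1}{2\pi}\int_\T |(1-re^{is})^{-\alpha}-1|\,ds$; the second lemma then majorizes this by $|(1-r)^{-\alpha}-1|$. This yields, with $r=|z|$,
$$\left|\frac{1-|z|^2}{1+|z|^2}G_\alpha(z)-\frac{(1-|z|^2)^{\alpha+1}}{1+|z|^2}f(0)\right|\le \frac{(1-r^2)^{\alpha+1}}{1+r^2}\,\big|(1-r)^{-\alpha}-1\big|.$$

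The remaining step is purely algebraic: recast the right-hand side into the two stated forms and add the result to the first lemma. For $\alpha\ge 0$ I would factor $1-r^2=(1-r)(1+r)$, cancel the power $(1-r)^{-\alpha}$ to obtain $\frac{(1+r)^{\alpha+1}}{1+r^2}(1-r)\big(1-(1-r)^\alpha\big)$, and bound $\frac{(1+r)^{\alpha+1}}{1+r^2}\le 2^{\alpha+1}$; combined with the $\alpha\ge 0$ case of the first lemma this gives part (1). For $\alpha<0$ I would use $\frac{(1-r^2)^{\alpha+1}}{1+r^2}\le 1$ to reduce the bound to $1-(1-r)^{-\alpha}$, and then invoke the elementary inequality $1-x\le x^{-1}-1$ for $x=(1-r)^{-\alpha}\in(0,1]$ (which is just $-(x-1)^2\le 0$) to reach $(1-r)^\alpha-1$; combined with the $\alpha<0$ case of the first lemma this gives part (2). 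The main obstacle is precisely this final bookkeeping, especially selecting the right intermediate majorant in the $\alpha<0$ case so the bound lands on $(1-|z|)^\alpha-1$ rather than on the superficially different expression $1-(1-|z|)^{-\alpha}$.
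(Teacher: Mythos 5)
Your proposal is correct and essentially reproduces the paper's proof: the paper performs the same decomposition (writing the discrepancy as $\frac{(1-|z|^2)^{\alpha+1}}{1+|z|^2}K_\alpha(z)$ with $K_\alpha = H_\alpha - f(0)$, which is exactly your pulled-out integral), applies the same two preceding lemmas, and finishes with the same factor-and-bound algebra. The only cosmetic difference is in case (2), where the paper bounds the prefactor $\frac{(1-|z|^2)^{\alpha+1}}{1+|z|^2}$ by $(1-|z|)^\alpha$ so that the product telescopes directly to $(1-|z|)^\alpha-1$, whereas you bound it by $1$ and invoke the elementary inequality $1-x\le x^{-1}-1$; both are one-line computations leading to the same estimate.
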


\begin{proof}
Let
$$H_\alpha(z):=\frac{G_\alpha(z)}{(1-|z|^2)^\alpha}=\frac{1}{2\pi}  \int_\T \frac{ f^*(e^{it})}{(1-\zb e^{it})^\alpha}\, dt.$$
and

$$ K_\alpha(z):=H_\alpha(z)-f(0).$$

Notice that 
$$  f(z)-\frac{(1-|z|^2)^{\alpha+1}}{1+|z|^2}f(0)= \left(f(z)-\frac{1-|z|^2}{1+|z|^2}G_\alpha(z)\right)+ \frac{(1-|z|^2)^{\alpha+1}}{1+|z|^2}K_\alpha(z).$$
Using the previous lemmas, it yields:

(1) if $\alpha\geq 0$, then
$$\left| f(z)-\frac{(1-|z|^2)^{\alpha+1}}{1+|z|^2}f(0)\right| \leq \frac{2^{\alpha+2}}{\pi} |f^*|_\infty\arctan|z|+ \frac{(1-|z|^2)^{\alpha+1}}{1+|z|^2}|K_\alpha(z)|$$

$$K_\alpha(z)=H_\alpha(z)-f(0)=\frac{1}{2\pi} \int_\T \left[ (1-\zb e^{it})^{-\alpha}-1 \right]f^*(e^{it})\, dt. $$

Thus
$$ |K_\alpha(z)| \leq    \frac{1}{2\pi} \int_\T \left|  (1-|z| e^{it})^{-\alpha}-1 \right|\, dt \leq  (1-r)^{-\alpha} -1.$$

(2) First, we remark that 
$$\frac{(1-|z|^2)^{\alpha+1}}{1+|z|^2} \leq (1-|z|^2)^\alpha \leq  (1-|z|)^\alpha.$$
The case $\alpha<0$ is treated similarly as the previous case. 
\end{proof}

Using the identity for $p\in(0,1)$ and $x,y\geq 0$
$$ (x+y)^p \leq x^p +y^p$$ 
and the convexity of the function $(1-x)^\alpha$ , we obtain  $ (1-x)^\alpha \geq 1-\alpha x$ for $\alpha \geq 1$ and $x\in [0,1]$.  

\begin{cor} Let $f:\D \to \D$ be an $\alpha$-harmonic function.
	\begin{enumerate}
	    \item If $0 < \alpha \leq 1$, then
$$\left| f(z)-\frac{(1-|z|^2)^{\alpha+1}}{1+|z|^2}f(0)\right| \leq  2^{\alpha+1} \left(\frac{2}{\pi}\arctan |z|+  |z|^\alpha \right).$$

	\item If $\alpha\geq 1$, then
$$\left| f(z)-\frac{(1-|z|^2)^{\alpha+1}}{1+|z|^2}f(0)\right| \leq  2^{\alpha+1} \left(\frac{2}{\pi}\arctan |z|+ \alpha |z| \right).$$\\
\end{enumerate}
\end{cor}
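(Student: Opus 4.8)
The plan is to read off the Corollary directly from the preceding Theorem in the case $\alpha\geq 0$, since the two claimed bounds carry exactly the same arctangent term as the Theorem and differ from it only in an elementary scalar remainder. Writing $r=|z|$, the Theorem gives
$$\left| f(z)-\frac{(1-|z|^2)^{\alpha+1}}{1+|z|^2}f(0)\right| \leq \frac{2^{\alpha+2}}{\pi}\arctan r + 2^{\alpha+1}(1-r)\bigl(1-(1-r)^\alpha\bigr),$$
while the two target estimates equal $\tfrac{2^{\alpha+2}}{\pi}\arctan r$ plus either $2^{\alpha+1}r^\alpha$ (for $0<\alpha\le 1$) or $2^{\alpha+1}\alpha r$ (for $\alpha\ge 1$), because $\tfrac{2^{\alpha+2}}{\pi}=2^{\alpha+1}\cdot\tfrac{2}{\pi}$. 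Hence, after cancelling the common factor $2^{\alpha+1}$, everything reduces to bounding the single quantity $(1-r)\bigl(1-(1-r)^\alpha\bigr)$ for $r\in[0,1)$.

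First I would discard the leading factor using the trivial bounds $0\le 1-r\le 1$ and $0\le(1-r)^\alpha\le 1$, which give
$$(1-r)\bigl(1-(1-r)^\alpha\bigr)\le 1-(1-r)^\alpha.$$
This already matches the shape of the remainders occurring in the Corollary, so it remains only to estimate $1-(1-r)^\alpha$ separately in each regime.

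For $0<\alpha\le 1$ I would invoke the subadditivity recorded just before the statement, $(x+y)^\alpha\le x^\alpha+y^\alpha$ for $\alpha\in(0,1]$, applied with $x=r$ and $y=1-r$: this yields $1=(r+(1-r))^\alpha\le r^\alpha+(1-r)^\alpha$, hence $1-(1-r)^\alpha\le r^\alpha$, which is precisely item (1). For $\alpha\ge 1$ I would instead use the convexity (Bernoulli) inequality $(1-r)^\alpha\ge 1-\alpha r$, also set up before the statement, to obtain $1-(1-r)^\alpha\le \alpha r$, which is precisely item (2). Multiplying back by $2^{\alpha+1}$ and restoring the arctangent term closes both cases.

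I expect no substantive obstacle: the Corollary is a clean relaxation of the Theorem, and the two auxiliary scalar inequalities needed are exactly those stated in the paragraph preceding it. The only point deserving a moment's attention is the legitimacy of dropping the factor $1-r$, which requires noting that $1-(1-r)^\alpha\ge 0$ (true since $(1-r)^\alpha\le 1$) so that the inequality runs in the correct direction; beyond that, the argument is a termwise comparison.
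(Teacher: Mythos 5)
Your proof is correct and follows exactly the route the paper intends: it reads the Corollary off the preceding Theorem (case $\alpha\geq 0$, with $|f^*|_\infty\leq 1$), discards the harmless factor $1-|z|$, and then applies the two scalar inequalities the paper records just before the statement, namely the subadditivity $(x+y)^\alpha\leq x^\alpha+y^\alpha$ for $0<\alpha\leq 1$ and the convexity bound $(1-x)^\alpha\geq 1-\alpha x$ for $\alpha\geq 1$. No gaps; the justification that $1-(1-|z|)^\alpha\geq 0$ makes the dropped factor legitimate is exactly the right point to check.
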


We close the paper by providing the power series expansion of $H_\alpha$, as it is anti-analytic function.

Since
$$\frac{1}{(1-\zb e^{it})^\alpha}=\sum_{n=0}^\infty \frac{(\alpha)_n}{n!}
\, \zb^n e^{int},$$
we deduce that
$$ H_\alpha(z)=\frac{G_\alpha(z)}{(1-|z|^2)^\alpha}=f(0)+\sum_{n=1}^\infty \frac{(\alpha)_n}{n!} \widehat{f^*}(-n)\,\zb^n, $$
where
$$\widehat{f^*}(-n)=\frac{1}{2\pi}\int_\T f^*(e^{it}) e^{int}\, dt. $$

 Here we are using that $f$ has the following  series expansion  
  
  $$
  f(z)= \sum_{n=1}^\infty \widehat{f^*}(-n) e_{\alpha,-n}(z)+ \sum_{n=0}^\infty \widehat{f^*}(n) z^n,
  $$
  see \cite[Sectiuon 5]{oA}.



\begin{thebibliography}{99}

\bibitem{CPW} S.L. Chen,  S. Ponnusamy,  X.T. Wang: Remarks on ‘Norm estimates of the partial derivatives for harmonic mappings and harmonic quasiregular mappings’. {\it J. Geom. Anal.} {\bf 31}, 11051–11060 (2021)


\bibitem{Du}  P. Duren,
{\it Theory of $H^{p}$ spaces,} 2nd ed., Dover, Mineola, N. Y., 2000.

\bibitem{Du1} P. Duren,
{\it Harmonic mappings in the plane,} Cambridge Univ. Press, 2004.



\bibitem{Ga} P. R. Garabedian,
A partial differential equation arising in conformal mapping,
\textit{Pacific J. Math.}, 4 (1951), 482--524.

\bibitem{HKZ}  H. Hedenmalm, B. Korenblum and K. Zhu, 
{\it Theory of Bergman spaces}, Springer, New York, 2000.

\bibitem{Heinz} E. Heinz: On one-to-one harmonic mappings. {\it Pacific J. Math.} {\bf 9}, 101–105 (1959).

\bibitem{Heth} H.W. Hethcote: Schwarz lemma analogues for harmonic functions. {\it Internat. J. Math. Ed. Sci. Tech.} {\bf 8}(1), 65–67 (1977)

\bibitem{KMM} A. Khalfallah, M. Mateljevi\'c, M. Mhamdi. Some Properties of Mappings Admitting General Poisson Representations. {\it Mediterr. J. Math.} {\bf 18}(5), 1-19 (2021) 

\bibitem{KM}A. Khalfallah, M. Maleljevi\'c, Estimates of partial derivatives for harmonic functions on the unit disc, submitted




\bibitem{LWX} P. Li, X. Wang and Q. Xiao,
Several properties of $\alpha$-harmonic functions in the unit disk,
\textit{Monatsh. Math.}, {\bf 184} (2017), 627--640.

\bibitem{Pli} P. Li; A. Rasila, and  Z. Wang, On properties of solutions to the  $\alpha$-harmonic equation.  {\it Complex Var. Elliptic Equ.} {\bf 65}, No. 12, 1981--1997 (2020)

\bibitem{LC}Li, M., Chen, X. Schwarz Lemma for Solutions of the $\alpha$-harmonic Equation.{\it  Bull. Malays. Math. Sci. Soc.} {\bf 45}, 2691–2713 (2022). 

	

\bibitem{olo} A. Olofsson,
Differential operators for a scale of Poisson type kernels in the unit disc,
\textit{J. Anal. Math.}, {\bf 123} (2014), 227--249.

\bibitem{oA} A. Olofsson and J. Wittsten,
Poisson integrals for standard weighted Laplacians in the unit disc,
\textit{J. Math. Soc. Japan}, {\bf 65} (2013), 447--486.


\bibitem{Zhu} {\sc J. F. Zhu,}
Norm estimates of the partial derivatives for harmonic mappings and harmonic quasiregular mappings,
\textit{J. Geom. Anal.,}   {\bf 31}, 5505–5525 (2021) 

\end{thebibliography}
\end{document}